\title{A proof of the second Rogers-Ramanujan identity via Kleshchev multipartitions}
\author{Shunsuke Tsuchioka}
\address{Department of Mathematical and Computing Sciences, Tokyo Institute of Technology, Tokyo 152-8551, Japan}
\email{tshun@kurims.kyoto-u.ac.jp}
\date{May 16, 2022}
\keywords{integer partitions,
Rogers-Ramanujan identities,
Kashiwara crystals,
quantum groups,
Hecke algebras}
\subjclass[2020]{Primary~11P84, Secondary~17B37}
\tikzstyle{every picture}+=[remember picture]
\tikzstyle{na} = [baseline=-.5ex]
\tikzstyle{mine}= [arrows={angle 90}-{angle 90},thick]
\def\Llleftarrow{%
\lower2pt\hbox{\begingroup
\tikz
\draw[shorten >=0pt,shorten <=0pt] (0,3pt) -- ++(-1em,0) (0,1pt) -- ++(-1em-1pt,0) (0,-1pt) -- ++(-1em-1pt,0) (0,-3pt) -- ++(-1em,0) (-1em+1pt,5pt) to[out=-105,in=45] (-1em-2pt,0) to[out=-45,in=105] (-1em+1pt,-5pt);
\endgroup}
}
\newtheorem{Thm}{Theorem}[section]
\newtheorem{Prop}[Thm]{Proposition}
\newtheorem{Rem}[Thm]{Remark}
\newtheorem{Cor}[Thm]{Corollary}
\DeclareMathOperator{\CH}{ch}
\DeclareMathOperator{\STR}{\mathsf{Str}}
\DeclareMathOperator{\PAR}{\mathsf{Par}}
\begin{document}
\maketitle

\begin{abstract}
We give another proof of the second Rogers-Ramanujan identity by Kashiwara crystals.
\end{abstract}

%\tableofcontents
\section{Introduction}
In ~\cite{LM}, Lepowsky and Milne observed a similarity
between
the characters 
of the level 3 standard modules of the affine Lie algebra of type $A^{(1)}_{1}$
\begin{align}
\CH V(2\Lambda_0+\Lambda_1) &= \frac{1}{(q;q^2)_{\infty}}\frac{1}{(q,q^4;q^5)_{\infty}},\\
\CH V(3\Lambda_0) &= \frac{1}{(q;q^2)_{\infty}}\frac{1}{(q^2,q^3;q^5)_{\infty}}.
\label{chsimi}
\end{align}
and the infinite products of the Rogers-Ramanujan identities 
\begin{align}
\sum_{n\geq 0}\frac{q^{n^2}}{(q;q)_n} &= \frac{1}{(q,q^4;q^5)_{\infty}},\\
\sum_{n\geq 0}\frac{q^{n(n+1)}}{(q;q)_n} &= \frac{1}{(q^2,q^3;q^5)_{\infty}}.
\label{RRiden}
\end{align}
Here, the $q$-Pochhammer symbols are defined for $n\in\mathbb{Z}_{\geq0} \sqcup \{\infty\}$ as follows.
\begin{align*}
(a;q)_n = \prod_{0\leq j<n} (1-aq^j),\quad
(a_1,\dots,a_k;q)_{n}= (a_1;q)_n \cdots (a_k;q)_n.
\end{align*}

Later, Lepowsky and Wilson promoted the observation to a vertex operator proof
and gave a Lie theoretic interpretation of the infinite sums in the Rogers-Ramanujan identities~\cite{LW3}.
The goal of this paper is to show that a result of Kashiwara crystals 
which is motivated by the representation theory of Hecke algebras~\cite[Corollary 9.6]{AKT}
promotes the equality \eqref{chsimi} into a proof of the second Rogers-Ramanujan identity \eqref{RRiden}.
Note that it is well-known that the Rogers-Ramanujan identities and the solvable lattice models 
from which the quantum groups originated are related (see ~\cite[Chapter 8]{An2}).
Several relationships between Rogers-Ramanujan type identities and Kashiwara crystals 
are also known (see ~\cite{DL} and the references therein).
The author was inspired by a recent work of Corteel which
gave a proof of \eqref{RRiden} %for the second Rogers-Ramanujan identity using
using the cylindric partitions and the Robinson-Schensted-Knuth correspondence~\cite{Cor}.

%In ~\cite{Har}, Hardy mentioned seven proofs for the Rogers-Ramanujan identities, which now are regarded as classical.
%Recently, Corteel gave a proof for the second Rogers-Ramanujan identity using
%cylindric partitions and Robinson-Schensted-Knuth correspondence~\cite{Cor}.
%It would be interesting if one can make a connection between these proofs.
%It also would be interesting if the method 
%works for other (conjectural) partition identities
%such as Andrews-Gordon, Capparelli, Nandi, Kanade-Russell, and so on.

%\newpage

\section{The main result}
A partition (resp. strict partition) is a weakly (resp. strictly) decreasing 
sequence $\lambda=(\lambda_1,\dots,\lambda_{\ell})$
of positive integers, i.e., $\lambda_1\geq\dots\geq\lambda_{\ell}\geq1$ (resp. $\lambda_1>\dots>\lambda_{\ell}\geq1$). 
We denote the set of partitions (resp. strict partitions) by $\PAR$ (resp. $\STR$).
We also denote the size $\lambda_1+\dots+\lambda_{\ell}$ (resp. the length $\ell$) of $\lambda$ by $|\lambda|$ (resp. $\ell(\lambda)$).
When $\lambda$ is empty (i.e., $\ell(\lambda)=0$), we put $\lambda_1=0$.

%For $p\geq 2$, 
%Thanks to Misra-Miwa realization~\cite{MM},
%the set of strict partitions $\STR$ have a $A^{(1)}_{1}$-crystal structure 
%that is isomorphic to the level 1 crystal $B(\Lambda_0)$.

\begin{Thm}[{\cite[(The transposed version of) Proposition 9.7]{AKT}}]
Let $k\geq 1$. Under the $A^{(1)}_{1}$-crystal isomorphism $\STR\cong B(\Lambda_0)$ due to Misra-Miwa~\cite{MM},
the canonical image $B(k\Lambda_0)$ in the tensor product $B(\Lambda_0)^{\otimes k}$
coincides with 
\begin{align*}
S_k = \{
\boldsymbol{\lambda}=(\lambda^{(1)},\dots,\lambda^{(k)})\in\STR^k\mid 
\ell(\lambda^{(i)})\geq (\lambda^{(i+1)})_1\textrm{ for $1\leq i<k$}
\}.
\end{align*}
%Here, we put $(\lambda^{(i+1)})_1=0$ if $\lambda^{(i+1)}$ is empty (i.e., $\ell(\lambda^{(i+1)})=0$).
\end{Thm}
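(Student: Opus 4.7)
The plan is to verify that $S_k$ equals the connected component of the tensor product crystal $B(\Lambda_0)^{\otimes k}$ through the highest-weight element $\boldsymbol{\emptyset} := (\emptyset, \ldots, \emptyset)$. Since that component is isomorphic to $B(k\Lambda_0)$, this identifies $S_k$ with the canonical image of $B(k\Lambda_0)$. I would first recall the Misra--Miwa description: assigning residue $b-a \pmod{2}$ to each cell $(a,b)$ of a strict partition, an addable (resp.\ removable) $i$-node is a box of residue $i$ whose addition to (resp.\ removal from) the Young diagram preserves strictness, and the operators $\tilde e_i, \tilde f_i$ act according to the standard $+-$ bracket-matching signature rule. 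For $\boldsymbol{\lambda} \in \STR^k$ the tensor-product rule computes the $i$-signature by concatenating the cleaned-up signatures of the individual components in a fixed reading order and performing a further round of bracket cancellation; the surviving sign activated by $\tilde f_i$ or $\tilde e_i$ pinpoints the tensor slot on which it acts.

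The heart of the argument then reduces to two closure statements: for every $\boldsymbol{\lambda} \in S_k$ and every $i \in \{0,1\}$, both $\tilde f_i \boldsymbol{\lambda}$ and $\tilde e_i \boldsymbol{\lambda}$ (when nonzero) lie in $S_k$. Granted these, $S_k$ is a disjoint union of connected components of $B(\Lambda_0)^{\otimes k}$, and it remains to show that $\boldsymbol{\emptyset}$ is the unique highest-weight element in $S_k$. For this, suppose $\boldsymbol{\lambda} \in S_k$ is nonempty and let $j$ be the largest index with $\lambda^{(j)} \neq \emptyset$; the bottom-right cell of $\lambda^{(j)}$ is a removable $i$-node for a specific $i \in \{0,1\}$, and because the tensor slots to the right of $j$ are empty, the corresponding $-$ lies at the extreme end of the tensor $i$-signature and cannot be cancelled, forcing $\tilde e_i \boldsymbol{\lambda} \neq 0$.

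The main obstacle is the closure under the crystal operators. For $\tilde f_i$, when the added box lies in $\lambda^{(j)}$, the interlacing $\ell(\lambda^{(j)}) \geq (\lambda^{(j+1)})_1$ can only be reinforced, but $\ell(\lambda^{(j-1)}) \geq (\lambda^{(j)})_1$ is threatened precisely when the box is added to row $1$ of $\lambda^{(j)}$ in a configuration where the interlacing is already tight, i.e., $\ell(\lambda^{(j-1)}) = (\lambda^{(j)})_1$. In that case the addable $i$-node at the end of row $1$ of $\lambda^{(j)}$ and the removable $i$-node at the end of the bottom row of $\lambda^{(j-1)}$ have the same residue $i$ and appear in consecutive tensor slots of the $i$-signature; one must show that the tensor-product bracketing always pairs and cancels these two signs, forbidding $\tilde f_i$ from activating the addable node of $\lambda^{(j)}$. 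The argument for $\tilde e_i$ is symmetric, and executing this residue- and boundary-row case analysis is the technical core of the proof.
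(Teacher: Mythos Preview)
The paper does not supply its own proof of this theorem: it is quoted from \cite[Proposition~9.7]{AKT}, credited there to Mathas, with alternative derivations indicated via \cite{KK} and \cite{Lit}. So there is no in-paper argument to compare against, and your sketch is being measured against the standard crystal-theoretic route that the cited references take.

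Your overall strategy---show that $S_k$ is closed under $\tilde e_i,\tilde f_i$ and that $\boldsymbol{\emptyset}$ is its unique highest-weight element---is exactly the right shape, and your uniqueness argument is fine. The gap is in the closure step. You assert that when $\ell(\lambda^{(j-1)})=(\lambda^{(j)})_1$ and the addable node at the end of row~$1$ of $\lambda^{(j)}$ has residue $i$, the removable node at the end of the bottom row of $\lambda^{(j-1)}$ also has residue $i$, and that these two signs cancel in the tensor signature. This is false in general. Take $k=2$ and $\lambda^{(1)}=\lambda^{(2)}=(1)$, which lies in $S_2$ with the interlacing tight. The addable node $(1,2)$ of $\lambda^{(2)}$ has residue $1$, but the only removable node of $\lambda^{(1)}$ is $(1,1)$, of residue $0$; in fact $\lambda^{(1)}$ contributes \emph{no} minus sign to the $1$-signature at all. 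Nonetheless $\tilde f_1$ does act on $\lambda^{(1)}$ (sending the pair to $((2),(1))\in S_2$), not because of a $+/-$ cancellation but because $\lambda^{(1)}$ carries its own addable $1$-node and the tensor rule selects it. More generally, if $\ell(\lambda^{(j-1)})=m=(\lambda^{(j)})_1$, the addable node $(1,m+1)$ of $\lambda^{(j)}$ has residue $m\bmod 2$, while the bottom removable node of $\lambda^{(j-1)}$ sits at $(m,(\lambda^{(j-1)})_m)$ with residue $(\lambda^{(j-1)})_m-m\bmod 2$; these agree only when $(\lambda^{(j-1)})_m$ is even.

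So the mechanism you isolate does not do the work you want. The closure statement is still true, but its proof requires a finer analysis of the full reduced $i$-signature of $\lambda^{(j-1)}$ (in particular, the interaction between its addable $i$-nodes and the tensor preference rule), not merely a single $+/-$ pairing across adjacent tensor slots.
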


This result is credited to Mathas in ~\cite[\S9]{AKT}.
%Note that in ~\cite[Proposition 9.7]{AKT}, the set of 2-restricted partitions are mentioned instead of $\STR$.
It is also a Corollary of ~\cite[Theorem 3.8]{KK} and ~\cite[Theorem 10.1]{Lit}.
An element of the connected component $S_k$ is called a Kleshchev multipartition in the context of
the representation theory of Hecke algebras.
For a generalization to $A^{(1)}_{p}$-crystal, where $p\geq 2$, see ~\cite[Corollary 9.6]{AKT}.
For a different characterization, see ~\cite{Jac}.
\begin{Thm}\label{mainresult}
For $k\geq 1$, we have
\begin{align*}
\sum_{\boldsymbol{\lambda}\in S_k}x^{\ell(\boldsymbol{\lambda})}q^{|\boldsymbol{\lambda}|}
=\sum_{i_1,\dots,i_k\geq0}
\frac{q^{\sum_{a=1}^{k}a{1+i_a\choose 2}+\sum_{1\leq a<b\leq k}ai_ai_b}}{(q;q)_{i_1}\cdots(q;q)_{i_k}}x^{\sum_{a=1}^{k}ai_a}.
\end{align*}
Here, for a $k$-tuple of strict partitions $\boldsymbol{\lambda}=(\lambda_1,\dots,\lambda_k)\in\STR^k$,
the size $|\boldsymbol{\lambda}|$ and the length $\ell(\boldsymbol{\lambda})$ are defined as follows.
\begin{align*}
|\boldsymbol{\lambda}|=|\lambda_1|+\cdots+|\lambda_k|,\quad
\ell(\boldsymbol{\lambda}) = \ell(\lambda_1)+\cdots+\ell(\lambda_k).
\end{align*}
\end{Thm}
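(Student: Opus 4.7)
The plan is to group the left-hand side by the sequence of component lengths $n_a := \ell(\lambda^{(a)})$, enumerate each component independently under this stratification, and then perform a triangular change of variables that converts the weakly decreasing sum into the unconstrained sum on the right-hand side.

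First I would observe that since $\lambda^{(a+1)}$ is strict one has $(\lambda^{(a+1)})_1 \geq \ell(\lambda^{(a+1)}) = n_{a+1}$, so the defining inequality of $S_k$ automatically forces $n_1 \geq n_2 \geq \cdots \geq n_k \geq 0$. For a fixed such tuple $(n_1,\ldots,n_k)$ the components can then be chosen independently: $\lambda^{(1)}$ is an arbitrary strict partition of length $n_1$, contributing $\frac{q^{\binom{n_1+1}{2}}}{(q;q)_{n_1}}$, and for $a\geq 2$ the component $\lambda^{(a)}$ is a strict partition of length $n_a$ with all parts in $\{1,\ldots,n_{a-1}\}$, which is an $n_a$-subset of that set and contributes $q^{\binom{n_a+1}{2}}\binom{n_{a-1}}{n_a}_{\!q}$. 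Summing over all weakly decreasing $(n_1,\ldots,n_k)$ weighted by $x^{n_1+\cdots+n_k}$ gives a closed form for the left-hand side.

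Next, I would expand each Gaussian binomial as $\binom{n_{a-1}}{n_a}_{\!q} = \frac{(q;q)_{n_{a-1}}}{(q;q)_{n_a}(q;q)_{n_{a-1}-n_a}}$ and telescope the factors $(q;q)_{n_a}$ across $a=1,\ldots,k$, leaving the denominator $(q;q)_{n_k}\prod_{a=1}^{k-1}(q;q)_{n_a-n_{a+1}}$. Introducing the substitution $i_a := n_a - n_{a+1}$ for $1 \leq a < k$ and $i_k := n_k$ (with the convention $n_{k+1}:=0$) gives a bijection between weakly decreasing tuples and $\mathbb{Z}_{\geq 0}^{k}$; under it one has $n_a = i_a + i_{a+1} + \cdots + i_k$, the denominator becomes $\prod_{a=1}^{k}(q;q)_{i_a}$, and the $x$-exponent becomes $\sum_a n_a = \sum_a a\, i_a$, which already matches the right-hand side.

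The only remaining piece of content is then the $q$-exponent identity
\[
\sum_{a=1}^{k}\binom{n_a+1}{2}
\;=\;
\sum_{a=1}^{k} a\binom{i_a+1}{2}
\;+\;
\sum_{1\leq a<b\leq k} a\, i_a i_b,
\]
which I would verify by writing $\binom{n_a+1}{2}=\tfrac{1}{2}(n_a^2+n_a)$ and expanding $n_a^2 = \sum_{b\geq a} i_b^2 + 2\sum_{a\leq b<c} i_b i_c$: the coefficient of $i_b^2$ in $\sum_a n_a^2$ is $b$, the coefficient of $2 i_b i_c$ for $b<c$ is again $b$, and the coefficient of $i_b$ in $\sum_a n_a$ is $b$, which after regrouping reproduces the right-hand exponent exactly. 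I do not expect any genuine obstacle in this proof; the one conceptual observation is the very first one, that the strictness of the components makes the length sequence $(n_a)$ automatically weakly decreasing and hence decouples the enumeration of the individual components. Everything after that is a bookkeeping calculation with Gaussian binomials.
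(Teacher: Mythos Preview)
Your proof is correct and follows essentially the same route as the paper: both stratify $S_k$ by the length sequence (your $n_a$ are the paper's $j_a+\cdots+j_k$, so your $i_a$ coincide with the paper's $j_a$), both reduce each stratum via the staircase subtraction to a Gaussian-binomial count, and both finish with the same exponent identity $\sum_a \binom{n_a+1}{2}=\sum_a a\binom{i_a+1}{2}+\sum_{a<b} a\,i_a i_b$ (the paper derives it from $|\Delta_{s+t}|=|\Delta_s|+|\Delta_t|+st$). The only cosmetic difference is that the paper packages the count as an inductive size-preserving bijection $V_{j_1,\dots,j_k}\to W_{j_1,\dots,j_k}$ to an auxiliary set whose generating function is immediate, whereas you compute the generating function of each stratum directly as a product; your version is slightly more streamlined but not a genuinely different argument.
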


\section{A proof of Theorem \ref{mainresult}}
As usual (see \cite[Definition 3.1]{An1}), we 
define the $q$-binomial coefficient 
\begin{align*}
{n\brack m}_q=\frac{(q;q)_n}{(q;q)_m(q;q)_{n-m}}
\end{align*} 
for $n\geq m\geq 0$. It is well-known (see ~\cite[Theorem 3.1]{An1}) that we have
\begin{align}
{n\brack m}_q=\sum_{\substack{\lambda\in\PAR \\ \ell(\lambda)\leq m \\ \lambda_1\leq n-m}}q^{|\lambda|}.
\label{LEC}
\end{align}

For $i,j\geq 0$, considering the staircase $\Delta_j=(j,j-1,\dots,1)\in\STR$, we see
\begin{align}
\sum_{\substack{\mu\in\STR \\ \ell(\mu)= j \\ \mu_1\leq i+j}}q^{|\mu|}=
q^{|\Delta_j|}\sum_{\substack{\lambda\in\PAR \\ \ell(\lambda)\leq j \\ \lambda_1\leq i}}q^{|\lambda|}.%\left(=q^{\Delta_j}{i+j\brack j}_q\right).
\label{LEA}
\end{align}
%for $i,j\geq 0$.
%which is equal to $q^{\Delta_j}{i+j\brack j}_q$.
%By $\Delta_i+\Delta_j+ij=\Delta_{i+j}$, we get the result.

\begin{Prop}
For $k\geq 1$ and $j_1,\dots,j_k\geq 0$, there is a size preserving bijection 
\begin{align*}
f_{j_1,\dots,j_k}:V_{j_1,\dots,j_k}\to W_{j_1,\dots,j_k},
\end{align*}
where
\begin{align*}
A_{j_1,\dots,j_k} &= \{\boldsymbol{\lambda}=(\lambda^{(1)},\dots,\lambda^{(k)})\in \STR^k\mid
\ell(\lambda^{(i)})=j_i+\dots+j_k\textrm{ for $1\leq i\leq k$}
\},\\
V_{j_1,\dots,j_k} &= S_k\cap A_{j_1,\dots,j_k},\\
W_{j_1,\dots,j_k} &= \{
\boldsymbol{\lambda}\in A_{j_1,\dots,j_k}\mid
((\lambda^{(i)})_{j_i+1},\dots,(\lambda^{(i)})_{\ell(\lambda^{(i)})})=\Delta_{\ell(\lambda^{(i+1)})}\textrm{ for $1\leq i<k$}
\}.
\end{align*}
\label{LEB}
\end{Prop}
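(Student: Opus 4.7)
Set $n_i:=j_i+\cdots+j_k$ (so $\ell(\lambda^{(i)})=n_i$) and $n_{k+1}:=0$. The plan is to construct the bijection $f_{j_1,\ldots,j_k}$ by induction on $k$. For the base case $k=1$, both $V_{j_1}$ and $W_{j_1}$ reduce to $\{\lambda\in\STR:\ell(\lambda)=j_1\}$, so $f_{j_1}$ is the identity. For $k\geq 2$, the truncation $(\lambda^{(2)},\ldots,\lambda^{(k)})$ lies in $V_{j_2,\ldots,j_k}$ (the Kleshchev conditions at indices $\geq 2$ do not involve $\lambda^{(1)}$), so applying the inductive $f_{j_2,\ldots,j_k}$ yields a tuple in $W_{j_2,\ldots,j_k}$. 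It remains to define $\mu^{(1)}$ as a strict partition of length $n_1$ with tail $\Delta_{n_2}$, possibly modifying the first rows of $\mu^{(2)}$ so as to preserve the total size. The recipe I would try first is to keep the first $j_1$ rows of $\lambda^{(1)}$ unchanged, replace its tail by $\Delta_{n_2}$, and absorb the excess $\epsilon^{(1)}_r:=\lambda^{(1)}_{j_1+r}-(n_2-r+1)$ into $\mu^{(2)}$'s rows; the bound $(\lambda^{(2)})_1\leq n_1$ present in $V$ but absent in $W$ provides the necessary room.

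The main obstacle is injectivity of the absorption. The componentwise version $\mu^{(2)}_r\mapsto\mu^{(2)}_r+\epsilon^{(1)}_r$ fails: for $k=2$, $j_1=j_2=1$, both $((3,2),(1))$ and $((3,1),(2))$ in $V$ produce $\mu^{(2)}=(2)$. A correct absorption requires an RSK-style straightening, resolving collisions by swapping with an entry of $\lambda^{(1)}$'s first $j_1$ rows and decrementing; this mimics the bijective realization of the $q$-identity ${n_1\brack n_2}_q/(q;q)_{n_1}=1/((q;q)_{j_1}(q;q)_{n_2})$. Verifying that such a rule extends consistently for all $k$ and defines a well-behaved inverse will be the technical heart of the argument.

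An alternative, which is sufficient for the application in Theorem \ref{mainresult}, is to bypass the explicit bijection in favour of verifying only the size equality $|V_{j_1,\ldots,j_k}|_q=|W_{j_1,\ldots,j_k}|_q$. Iterating \eqref{LEA} gives
\[
|V_{j_1,\ldots,j_k}|_q=\frac{q^{\binom{n_1+1}{2}}}{(q;q)_{n_1}}\prod_{i=2}^k q^{\binom{n_i+1}{2}}{n_{i-1}\brack n_i}_q,
\]
while the prescribed staircase tails force
\[
|W_{j_1,\ldots,j_k}|_q=\prod_{i=1}^k\frac{q^{\binom{j_i+1}{2}+j_in_{i+1}+\binom{n_{i+1}+1}{2}}}{(q;q)_{j_i}}.
\]
These two expressions coincide after the telescoping $\prod_{i=2}^k{n_{i-1}\brack n_i}_q=(q;q)_{n_1}/\prod_{i=1}^k(q;q)_{j_i}$ and the elementary identity $\binom{n_1+1}{2}=\sum_i\binom{j_i+1}{2}+\sum_{a<b}j_aj_b$; the existence of the desired size-preserving bijection then follows from the equality of generating functions.
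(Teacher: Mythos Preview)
Your alternative (the direct generating–function comparison) is correct and is, in essence, exactly what the paper does. The paper also never writes down an explicit bijection: for $k=2$ it verifies
\[
\sum_{(\lambda,\mu)\in V_{i,j}}q^{|\lambda|+|\mu|}
=\frac{q^{|\Delta_{i+j}|}}{(q;q)_{i+j}}\cdot q^{|\Delta_j|}{i+j\brack j}_q
=\frac{q^{|\Delta_{i+j}|}}{(q;q)_i}\cdot\frac{q^{|\Delta_j|}}{(q;q)_j}
=\sum_{(\lambda,\mu)\in W_{i,j}}q^{|\lambda|+|\mu|}
\]
using \eqref{LEC} and \eqref{LEA}, which is precisely your telescoping identity specialized to two indices; it then declares the existence of some size–preserving $f_{i,j}$ and, for $k\ge 3$, composes $f_{j_1,n_2}$ on the first two components followed by the inductive $f_{j_2,\dots,j_k}$ on the last $k-1$. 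Your computation carries out the same identity for all $k$ at once, bypassing the induction; both routes yield the statement with the same nonconstructive content.

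The one substantive difference is the \emph{order} of the induction, and this is exactly what resolves the difficulty you flagged in your first attempt. You recurse on $(\lambda^{(2)},\dots,\lambda^{(k)})$ first and are then left with the stray constraint $(\lambda^{(2)})_1\le n_1$ to absorb into $\mu^{(1)}$ and the already–normalized $\mu^{(2)}$; as your collision example shows, this cannot be done componentwise. The paper instead applies $f_{j_1,n_2}$ to $(\lambda^{(1)},\lambda^{(2)})$ \emph{first}. This consumes the constraint $(\lambda^{(2)})_1\le n_1$ and outputs a $\mu^{(1)}$ with tail $\Delta_{n_2}$ together with an \emph{unconstrained} $\mu^{(2)}\in\STR$ of length $n_2$. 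Since the only condition linking the second and third components is $(\lambda^{(3)})_1\le n_2=\ell(\mu^{(2)})$, the tuple $(\mu^{(2)},\lambda^{(3)},\dots,\lambda^{(k)})$ lies in $V_{j_2,\dots,j_k}$ on the nose, and one can recurse cleanly. So the ``RSK-style straightening'' you anticipated is unnecessary: reversing the order of the two steps makes the composite a bijection by fiat.
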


\begin{proof}
We prove the claim by induction on $k$. The case $k=1$ is trivial.

Similarly to ~\eqref{LEA}, for $i,j\geq 0$ we see 
\begin{align}
\sum_{(\lambda,\mu)\in V_{i,j}}q^{|\lambda|+|\mu|} &= \sum_{\substack{\mu\in\STR \\ \ell(\mu)=j \\ \mu_1\leq i+j}} q^{|\mu|} 
\frac{q^{|\Delta_{i+j}|}}{(q;q)_{i+j}},\\
\sum_{(\lambda,\mu)\in W_{i,j}}q^{|\lambda|+|\mu|} &= \frac{q^{|\Delta_{i+j}|}}{(q;q)_i}\frac{q^{|\Delta_j|}}{(q;q)_j},
\label{LED}
\end{align}
which are equal to each other thanks to ~\eqref{LEC} and ~\eqref{LEA}.
This settled the case $k=2$.

For $k\geq 3$, it is easily seen that the composite 
\begin{align*}
\boldsymbol{\lambda}=(\lambda^{(1)},\dots,\lambda^{(k)}) &\mapsto
\boldsymbol{\mu}=(\mu^{(1)},\dots,\mu^{(k)}):=(f_{j_1,j_2+\dots+j_k}(\lambda^{(1)},\lambda^{(2)}),\lambda^{(3)},\dots,\lambda^{(k)}) \\
&\mapsto
(\mu^{(1)},f_{j_2,\dots,j_k}(\mu^{(2)},\dots,\mu^{(k)}))
\end{align*}
is a size preserving bijection from $V_{j_1,\dots,j_k}$ to $W_{j_1,\dots,j_k}$.
\end{proof}

Theorem \ref{mainresult} is proved as follows. Clearly, we have
\begin{align*}
\sum_{\boldsymbol{\lambda}\in S_k}x^{\ell(\boldsymbol{\lambda})}q^{|\boldsymbol{\lambda}|}
=
\sum_{j_1,\dots,j_k\geq 0}x^{j_1+2j_2+\cdots+kj_k}\sum_{\boldsymbol{\lambda}\in V_{j_1,\dots,j_k}}q^{|\boldsymbol{\lambda}|}.
\end{align*}

By Proposition \ref{LEB}, the right hand side is equal to
\begin{align*}
\sum_{j_1,\dots,j_k\geq 0}x^{j_1+2j_2+\cdots+kj_k}\sum_{\boldsymbol{\lambda}\in W_{j_1,\dots,j_k}}q^{|\boldsymbol{\lambda}|}.
\end{align*}
Similarly to ~\eqref{LED}, we see
\begin{align*}
\sum_{\boldsymbol{\lambda}\in W_{j_1,\dots,j_k}}q^{|\boldsymbol{\lambda}|}
=\prod_{a=1}^{k}\frac{q^{|\Delta_{j_a+\cdots+j_k}|}}{(q;q)_{j_a}}.
\end{align*}
Using $|\Delta_{s+t}|=|\Delta_{s}|+|\Delta_{t}|+st$ for $s,t\geq 0$, we have
\begin{align*}
|\Delta_{j_{a}+\cdots+j_k}|
=\sum_{b=a}^{k}|\Delta_{j_b}|+\sum_{a\leq b<b'\leq k}j_bj_{b'}
\end{align*}
and thus we have
\begin{align*}
\sum_{a=1}^{k}|\Delta_{j_{a}+\cdots+j_k}|
=\sum_{a=1}^{k}a|\Delta_{j_a}|+\sum_{1\leq b<b'\leq k}bj_bj_{b'}.
\end{align*}

\section{A proof of the second Rogers-Ramanujan identity}
In the proof, let
\begin{align*}
F(x,q) &= \sum_{s,t,u\geq 0}\frac{q^{{s+1\choose 2}+2{t+1\choose 2}+3{u+1\choose 2}+st+su+2tu}x^{s+2t+3u}}{(q;q)_s(q;q)_t(q;q)_u},\\
G(x,q) &= \sum_{s\geq 0}\frac{q^{s(s+1)}x^{2s}}{(q;q)_s}.
\end{align*}

%\begin{align*}
%\sum_{s,t,u\geq 0}\frac{q^{{s+1\choose 2}+2{t+1\choose 2}+3{u+1\choose 2}+st+su+2tu}x^{s+2t+3u}}{(q;q)_s(q;q)_t(q;q)_u}
%=
%\left(\sum_{s\geq 0}\frac{q^{2{s+1\choose 2}}x^{2s}}{(q;q)_s}\right)
%\left(\sum_{t\geq 0}\frac{q^{t^2}x^{t}}{(q^2;q^2)_t}\right)
%\left(\sum_{u\geq 0}\frac{q^{2{u+1\choose 2}}x^{u}}{(q^2;q^2)_u}\right).
%\end{align*}

\begin{Prop}\label{gdiff}
We have the following $q$-difference equation.
\begin{align*}
G(x,q) = (1+x^2q^2+x^2q^3)G(xq,q)-x^4q^7G(xq^2,q).
\end{align*}
\end{Prop}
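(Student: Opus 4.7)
The plan is to verify the $q$-difference equation by comparing the coefficient of $x^{2s}$ on both sides for each $s\geq 0$. Since $G(x,q)$ is a power series in $x^2$, both sides are power series in $x^2$, and termwise equality is enough.

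First, I would read off the coefficient of $x^{2s}$ on each side. On the left it is $q^{s(s+1)}/(q;q)_s$ directly from the definition. On the right, the substitution $x\mapsto xq$ multiplies the coefficient of $x^{2s}$ in $G$ by $q^{2s}$, while $x\mapsto xq^2$ multiplies it by $q^{4s}$; the prefactors $x^2$ and $x^4$ shift the summation index by $1$ and $2$ respectively. Collecting the three contributions, the coefficient of $x^{2s}$ on the right becomes
\[
\frac{q^{s^2+3s}}{(q;q)_s} + \frac{(1+q)\,q^{s^2+s}}{(q;q)_{s-1}} - \frac{q^{s^2+s+1}}{(q;q)_{s-2}},
\]
with the usual convention $1/(q;q)_n = 0$ for $n<0$, which handles $s=0,1$ automatically.

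Next, I would clear denominators by multiplying through by $(q;q)_s = (1-q^s)(1-q^{s-1})(q;q)_{s-2}$ and divide out the common factor $q^{s^2+s}$. The identity then reduces to the polynomial identity in $q^s$,
\[
1 = q^{2s} + (1+q)(1-q^s) - q(1-q^s)(1-q^{s-1}),
\]
which one checks by direct expansion: the right-hand side is $q^{2s} + 1 - q^s + q - q^{s+1} - q + q^s + q^{s+1} - q^{2s} = 1$.

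I do not anticipate any real obstacle. The argument is essentially a mechanical coefficient comparison, and the only point requiring attention is the bookkeeping of exponents after the substitutions $x\mapsto xq^j$ and the index shifts by $1$ and $2$; once those exponents are correctly assembled into $s^2+3s$, $s^2+s$, and $s^2+s+1$, the final reduction is a one-line polynomial check.
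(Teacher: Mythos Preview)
Your proof is correct and follows essentially the same approach as the paper: both extract the coefficient of $x^{M}$ (the paper writes the recurrence as $(1-q^M)g_M - q^M(1+q)g_{M-2} + q^{2M-1}g_{M-4}=0$ with $g_{2s}=q^{s(s+1)}/(q;q)_s$) and verify it by direct computation. You have simply spelled out the ``easy to verify'' step in full detail.
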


\begin{proof}
It is easy to verify that for all $M\in\mathbb{Z}$ we have
\begin{align*}
(1-q^M)g_M -q^M(1+q)g_{M-2} +q^{2M-1}g_{M-4}=0,
\end{align*}
where $g_{2s}=q^{s(s+1)}/(q;q)_s$ for $s\in\mathbb{Z}_{\geq 0}$
and $g_M=0$ for $M\in\mathbb{Z}\setminus 2\mathbb{Z}_{\geq 0}$.
\end{proof}

\begin{Prop}\label{fdiff}
We have the following $q$-difference equation.
\begin{align*}
F(x,q) = (1+xq)(1+x^2q^2+x^2q^3)F(xq,q)-x^4q^7(1+xq)(1+xq^2)F(xq^2,q).
\end{align*}
\end{Prop}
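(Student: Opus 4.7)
The plan is to verify this $q$-difference equation by comparing coefficients of $x^N$ on both sides, following the strategy used for Proposition~\ref{gdiff}. Setting $F(x,q) = \sum_{N\geq 0} F_N(q)\, x^N$, the desired equation rearranges into the seven-term recurrence
\begin{equation*}
(1-q^N) F_N(q) = q^N F_{N-1}(q) + q^N(1+q)\bigl(F_{N-2}(q)+F_{N-3}(q)\bigr) - q^{2N-1} F_{N-4}(q) - q^{2N-2}(1+q) F_{N-5}(q) - q^{2N-2} F_{N-6}(q),
\end{equation*}
where $F_M(q) = 0$ for $M < 0$. From Theorem~\ref{mainresult} with $k=3$, the coefficient $F_N(q)$ is the triple sum
\begin{equation*}
F_N(q) = \sum_{\substack{s,t,u\geq 0\\ s+2t+3u=N}} \frac{q^{\binom{s+1}{2} + 2\binom{t+1}{2} + 3\binom{u+1}{2} + st + su + 2tu}}{(q;q)_s (q;q)_t (q;q)_u}.
\end{equation*}

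To verify the recurrence, I would look for a pointwise identity on the summands $h_{s,t,u}(q)$ that, upon summing over the fiber $s+2t+3u = N$, yields the seven-term recurrence. A convenient preliminary step is the change of variables $(a,b,c) = (s+t+u,\,t+u,\,u)$, which puts the summand into the symmetric form
\begin{equation*}
F(x,q) = \sum_{a\geq b\geq c\geq 0} \frac{q^{\binom{a+1}{2} + \binom{b+1}{2} + \binom{c+1}{2}}}{(q;q)_{a-b}(q;q)_{b-c}(q;q)_c}\, x^{a+b+c},
\end{equation*}
so that each of the three indices contributes an Euler-type factor $q^{\binom{\cdot+1}{2}}/(q;q)_\cdot$. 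The extra factors $(1+xq)$ and $(1+xq)(1+xq^2)$ present in this $q$-difference equation (but absent from that of Proposition~\ref{gdiff}) should arise naturally from the Euler identity $(-xq;q)_\infty = \sum_s q^{\binom{s+1}{2}}x^s/(q;q)_s$, which in turn corresponds to one of the three indices now available.

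The main obstacle I anticipate is the bookkeeping in translating the three contiguous relations
\[
(1-q^s) h_{s,t,u} = q^{s+t+u} h_{s-1,t,u},\quad (1-q^t) h_{s,t,u} = q^{2t+s+2u} h_{s,t-1,u},\quad (1-q^u) h_{s,t,u} = q^{3u+s+2t} h_{s,t,u-1}
\]
into the desired seven-term recurrence: unlike in Proposition~\ref{gdiff} where a single three-term recurrence on one index was enough, here the interactions among the three shift operators must be carefully organized on a fiber where $N = s+2t+3u$ is constant. A promising reduction is to apply the three-term recurrence of Proposition~\ref{gdiff} along the ``$u$-direction'' of $h_{s,t,u}$ (producing the $G$-like shape with coefficients involving $q^N$), and then absorb the remaining $s$ and $t$ summations using Euler's expansion together with the $q$-Pascal rule that underlies \eqref{LEC}; the resulting $(1+xq)$ factors materialize from the Euler sum over the $s$-index.
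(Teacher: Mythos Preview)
Your seven-term recurrence for the coefficients $F_N$ is exactly the one the paper derives, and your contiguous relations for the summand $h_{s,t,u}$ are correct. But the plan that follows does not actually yield a proof. The difficulty you flag as ``bookkeeping'' is structural, not cosmetic: the contiguous relations only connect $h_{s,t,u}$ to $h_{s-1,t,u}$, $h_{s,t-1,u}$, $h_{s,t,u-1}$, whereas the recurrence couples six different fibers $s+2t+3u=N,\dots,N-6$. To collapse a multisum recurrence of this kind one needs telescoping correction terms in the inner variables, not just the first-order shifts; this is exactly what the paper supplies. Its proof is a creative-telescoping (Sister Celine / Wegschaider) argument: with shift operators $N,T,U$ it exhibits explicit operators $A,B,C$ such that $(A+(1-T)B+(1-U)C)f(n,t,u)=0$, so that summing over $t,u$ annihilates the $(1-T)B$ and $(1-U)C$ pieces and leaves precisely the recurrence $Af_n=0$. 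Your outline has no analogue of $B$ and $C$, and ``apply Proposition~\ref{gdiff} along the $u$-direction and then absorb $s,t$ by Euler/$q$-Pascal'' does not produce the needed cancellations.

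Your instinct that the extra factors $(1+xq)$ and $(1+xq)(1+xq^2)$ come from the Euler expansion of $(-xq;q)_\infty$ is right, but it points toward a different (and cleaner) argument than the one you sketch: prove directly that $F(x,q)=(-xq;q)_\infty\,G(x,q)$ and then \emph{deduce} the $q$-difference equation from Proposition~\ref{gdiff}. The paper records this as a remark due to Warnaar: substituting $t\mapsto t-u$ makes the $u$-sum a terminating ${}_2\phi_1$ that collapses by $q$-Chu--Vandermonde, giving $f_n=\sum_{t}q^{\binom{n-2t+1}{2}+t(t+1)}/((q;q)_{n-2t}(q;q)_t)$, which is exactly the Cauchy product of the Euler series with $G$. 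If you want an elementary proof, carry this out; if you want to prove the $q$-difference equation head-on, you will need the telescoping certificates the paper provides.
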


\begin{proof}
Our proof is a typical application of a $q$-version of Wegschaider's 
improvement of Sister Celine's technique (see ~\cite{Rie}).

Let $F(x)=\sum_{n\in\mathbb{Z}}f_n(q)x^n$ and put
\begin{align*}
f(n,t,u) = \frac{q^{{n-2t-3u+1\choose 2}+2{t+1\choose 2}+3{u+1\choose 2}+(n-2t-3u)(t+u)+2tu}}{(q;q)_{n-2t-3u}(q;q)_t(q;q)_u}
\end{align*}
for $n,t,u\in\mathbb{Z}$, where we regard $\frac{1}{(q;q)_{v}}=0$ if $v<0$.
Because $f(n,t,u)$ is $q$-proper hypergeometric (see ~\cite[\S2.1]{Rie}), one can automatically derive
a $q$-holonomic recurrence for $f_n$ thanks to $f_n=\sum_{t,u\in\mathbb{Z}^2}f(n,t,u)$.

Let $(Ng)(n,t,u) = g(n-1,t,u)$,
$(Tg)(n,t,u) = g(n,t-1,u)$, 
$(Ug)(n,t,u) = g(n,t,u-1)$ be the shift operators for $g:\mathbb{Z}^3\to\mathbb{Q}(q)$ and let
\begin{align*}
A &= (1-q^n)-q^nN-q^n(1+q)(N^2+N^3)+q^{2n-1}N^4+q^{2n-2}((1+q)N^5+N^6),\\
B &= (q^n-q^{2t+u})+q^n(-1+q^t+q^u)N+q^{n+u}(1+q^{1+t})N^2+q^{n+2t+u}UN^3,\\
C &= q^n(1-q^t)N+q^{1+n+u}(1-q^t)N^2+q^n(1+q^{1+u})N^3-q^{2n-1}N^4-q^{2n-2}((1+q)N^5+N^6).
%C &= q^n(1-q^t)N^{-1}+q^{1+n+u}(1-q^t)N^{-2}+q^n(1+q^{1+u})N^{-3}-q^{2n-1}N^{-4}-q^{2n-2}(1+q)N^{-5}-q^{2n-2}N^{-6},
\end{align*}
One can check that 
\begin{align*}
(A+(1-T)B+(1-U)C)f(n,t,u)=0.
\end{align*}
By this certificate recurrence operator (see ~\cite[\S3]{Rie} and ~\cite[\S7.1]{Tsu}), we get
\begin{align*}
(1-q^n)f_n-q^nf_{n-1}-q^n(1+q)(f_{n-2}+f_{n-3})+q^{2n-1}f_{n-4}+q^{2n-2}((1+q)f_{n-5}+f_{n-6})=0
\end{align*}
for $n\in\mathbb{Z}$. This is equivalent to the $q$-difference equation in the Proposition.
\end{proof}

\begin{Cor}\label{fincor}
We have $F(x,q)= (-xq;q)_{\infty}G(x,q)$.
\end{Cor}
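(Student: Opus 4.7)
The plan is to set $H(x,q):=(-xq;q)_{\infty}G(x,q)$ and show $H=F$ by checking that $H$ satisfies the same $q$-difference equation as $F$ (Proposition \ref{fdiff}), with the same initial value $F(0,q)=H(0,q)=1$, and then applying a uniqueness argument on the power series coefficients in $x$.

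To show $H$ satisfies the $q$-difference equation of Proposition \ref{fdiff}, I will use the elementary factorization $(-xq;q)_{\infty}=(1+xq)(-xq^2;q)_{\infty}$; applying it once and twice yields
\begin{align*}
H(xq,q)=\frac{(-xq;q)_{\infty}}{1+xq}\,G(xq,q),\qquad
H(xq^2,q)=\frac{(-xq;q)_{\infty}}{(1+xq)(1+xq^2)}\,G(xq^2,q).
\end{align*}
Multiplying the $q$-difference equation of Proposition \ref{gdiff} through by $(-xq;q)_{\infty}$ and substituting the above reproduces verbatim the $q$-difference equation of Proposition \ref{fdiff} for $H$.

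For the uniqueness step, expand $F(x,q)=\sum_{n\geq 0}f_n(q)x^n$ and similarly $H(x,q)=\sum_{n\geq 0}h_n(q)x^n$ with coefficients in $\mathbb{Q}[[q]]$. The proof of Proposition \ref{fdiff} already records the recurrence obtained by extracting the coefficient of $x^n$ from the $q$-difference equation, namely
\begin{align*}
(1-q^n)f_n=q^n f_{n-1}+q^n(1+q)(f_{n-2}+f_{n-3})-q^{2n-1}f_{n-4}-q^{2n-2}((1+q)f_{n-5}+f_{n-6}).
\end{align*}
Since $1-q^n$ has constant term $1$ for each $n\geq 1$, it is a unit in $\mathbb{Q}[[q]]$, so the $f_n$ are uniquely determined by $f_0$. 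The same recurrence holds for $h_n$. Finally, $F(0,q)=1$ (the $s=t=u=0$ term) and $H(0,q)=1\cdot G(0,q)=1$, so $F=H$.

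The only potential source of difficulty is the bookkeeping of the infinite product $(-xq;q)_{\infty}$ under the substitutions $x\mapsto xq,xq^2$, but this is entirely settled by the one-line factorization used above, so I anticipate no real obstacle.
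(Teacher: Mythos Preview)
Your proposal is correct and follows essentially the same route as the paper: show that $(-xq;q)_{\infty}G(x,q)$ satisfies the $q$-difference equation of Proposition~\ref{fdiff} (via the factorization $(-xq;q)_{\infty}=(1+xq)(-xq^2;q)_{\infty}$) and then invoke uniqueness from the matching initial data. The paper phrases the initial data as agreement of the $x^0$ coefficient and vanishing for $x^n$ with $n<0$, which is exactly your implicit convention when writing $\sum_{n\ge 0}$; your explicit observation that $1-q^n$ is a unit in $\mathbb{Q}[[q]]$ makes the uniqueness step cleaner than the paper's terse statement.
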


\begin{proof}
By Proposition \ref{gdiff} and Proposition \ref{fdiff}, $F(x,q)$ and $(-xq;q)_{\infty}G(x,q)$
satisfy the same $q$-difference equation presented in Proposition \ref{fdiff}.
Then, the equality follows from the fact that both
the coefficients of $x^0$ (resp. $x^n$ for $n<0$) in $F(x,q)$ and $(-xq;q)_{\infty}G(x,q)$ are equal to 1 (resp. 0).
\end{proof}

\begin{Rem}
After submission to arXiv of the first version of this paper, 
we learned from Ole Warnaar that Corollary \ref{fincor} is easily deduced
by a trick to use $f_n=\sum_{t,u\in\mathbb{Z}} f(n,t-u,u)$ instead of $f_n=\sum_{t,u\in\mathbb{Z}} f(n,t,u)$ noticing
\begin{align*}
f(n,t-u,u)=\frac{q^{{n+1\choose 2}+t(t-n)}}{(q;q)_{n-2t}(q;q)_{t}}
\frac{(q^{-t};q)_u(q^{-(n-2t)};q)_u}{(q;q)_u}.
\end{align*}
Thanks to the $q$-Chu-Vandermonde identity 
${}_2\phi_1(a,q^{-m};0;q,q)=a^m$ for a nonnegative integer $m$ 
(see ~\cite[(2.41)]{An2}), 
we have
\begin{align*}
f_n
=\sum_{t=0}^{\lfloor n/2\rfloor}\frac{q^{{n+1\choose 2}+t(t-n)-t(n-2t)}}{(q;q)_{n-2t}(q;q)_{t}}
=\sum_{t=0}^{\lfloor n/2\rfloor}\frac{q^{{n-2t+1\choose 2}+t(t+1)}}{(q;q)_{n-2t}(q;q)_{t}}.
\end{align*}
This is equivalent to Corollary \ref{fincor} by Euler's identity $(-xq;q)=\sum_{m\geq 0}\frac{q^{{m+1\choose 2}}x^m}{(q;q)_m}$.
\end{Rem}

The second Rogers-Ramanujan identity \eqref{RRiden} is proved as follows.
By Theorem \ref{mainresult}, Lepowsky-Milne's observation \eqref{chsimi} is translated to
\begin{align*}
F(1,q)=\frac{1}{(q;q^2)_{\infty}}\frac{1}{(q^2,q^3;q^5)_{\infty}}.
\end{align*}

By Corollary \ref{fincor} and Euler's identity $(q;q^2)_{\infty}(-q;q)_{\infty}=1$, we have
\begin{align*}
G(1,q)=\frac{1}{(q^2,q^3;q^5)_{\infty}}.
\end{align*}

\noindent{\bf Acknowledgments.} 
The author would like to express his sincere thanks to 
Ole Warnaar for sharing a simple proof with the author.
%and give the author permission to include it in this paper.
The author was supported by JSPS Kakenhi Grant 20K03506, the Inamori Foundation, JST CREST Grant Number JPMJCR2113, Japan and Leading Initiative for Excellent Young Researchers, MEXT, Japan.

%\bibliographystyle{abbrv}
%\bibliography{clean}

\end{document}